\documentclass[11pt]{amsart}
\usepackage[latin1]{inputenc}
\usepackage{amsmath,amsthm,amssymb}
\usepackage{amsfonts}
\usepackage{amsmath,amsthm,amssymb,amscd}
\usepackage{latexsym}
\usepackage{color}
\usepackage{graphicx}
\usepackage{mathrsfs}
\usepackage{comment}
\usepackage{cancel}

\textwidth172mm \textheight22cm \hoffset-24mm \voffset-20mm

\makeatletter \@addtoreset{equation}{section} \makeatother

\setlength{\parindent}{1em}

\newtheorem{theorem}{Theorem}[section]

\newtheorem{proposition}{Proposition}[section]
\newtheorem{lemma}{Lemma}[section]
\newtheorem{remark}{Remark}[section]

\newtheoremstyle{case}{}{}{}{}{}{\em:}{ }{}
\theoremstyle{case}

\numberwithin{subcase}{case}

\allowdisplaybreaks

\begin{document}


\title[Hardy double phase problems]
{A double phase problem involving Hardy potentials}

\author[Alessio Fiscella]{Alessio Fiscella}
\email{fiscella@ime.unicamp.br}

\address[Alessio Fiscella]{Departamento de Matem\'atica, Universidade Estadual de Campinas, IMECC\\
Rua S\'ergio Buarque de Holanda, 651, Campinas, SP CEP 13083-859 Brazil}

\subjclass[2010]{35J62; 35J92; 35J20.} \keywords{Double phase problems; Hardy potentials; variational methods.}

\date{July 16, 2020}
\maketitle


\begin{abstract} 
In this paper, we deal with the following double phase problem
$$
\left\{\begin{array}{ll}
-\mbox{div}\left(|\nabla u|^{p-2}\nabla u+a(x)|\nabla u|^{q-2}\nabla u\right)=
\gamma\left(\displaystyle\frac{|u|^{p-2}u}{|x|^p}+a(x)\displaystyle\frac{|u|^{q-2}u}{|x|^q}\right)+f(x,u) & \mbox{in } \Omega,\\
u=0 & \mbox{in } \partial\Omega,
\end{array}
\right.
$$
where $\Omega\subset\mathbb R^N$ is an open, bounded set with Lipschitz boundary, $0\in\Omega$, $N\geq2$, $1<p<q<N$, weight $a(\cdot)\geq0$, $\gamma$ is a real parameter and $f$ is a subcritical function. By variational method, we provide the existence of a non-trivial weak solution on the Musielak-Orlicz-Sobolev space $W^{1,\mathcal H}_0(\Omega)$, with modular function $\mathcal H(t,x)=t^p+a(x)t^q$. For this, we first introduce the Hardy inequalities for space $W^{1,\mathcal H}_0(\Omega)$, under suitable assumptions on $a(\cdot)$.

\end{abstract}

\maketitle

\section{Introduction}\label{sec:introduction}

In the present paper, we study the following problem 
\begin{equation}\label{P}
\left\{\begin{array}{ll}
-\mbox{div}\left(|\nabla u|^{p-2}\nabla u+a(x)|\nabla u|^{q-2}\nabla u\right)=
\gamma\left(\displaystyle\frac{|u|^{p-2}u}{|x|^p}+a(x)\displaystyle\frac{|u|^{q-2}u}{|x|^q}\right)+f(x,u) & \mbox{in } \Omega,\\
u=0 & \mbox{in } \partial\Omega,
\end{array}
\right.
\end{equation}
where $\Omega\subset\mathbb R^N$ is an open, bounded set with Lipschitz boundary, $0\in\Omega$, $N\geq2$, $\gamma$ is a real parameter, $1<p<q<N$ and
\begin{equation}\label{cruciale}
\frac{q}{p}<1+\frac{1}{N},\qquad a:\overline{\Omega}\to[0,\infty)\mbox{ {\em is Lipschitz continuous.}}
\end{equation}
Here, we assume that $f:\Omega\times\mathbb R\to\mathbb R$ is a {\em Carath\'{e}odory} function verifying
\begin{enumerate}
\item[$(f_1)$] {\em there exists an exponent $r\in (q,p^*)$, with the critical Sobolev exponent $p^*=Np/(N-p)$, such that for any $\varepsilon>0$ there exists $c_\varepsilon=c(\varepsilon)>0$ and
$$
|f(x,t)|\leq q\varepsilon\left|t\right|^{q-1} +r\delta_\varepsilon\left|t\right|^{r-1}
$$
holds for a.e. $x\in \Omega$ and any $t\in\mathbb{R}$};
\item[$(f_2)$]
{\em there exist $\theta\in (q,p^*)$, $c>0$ and $t_0\geq0$ such that}
$$c\leq\theta F(x,t)\leq tf(x,t)$$
{\em for a.e. $x\in\Omega$ and any }$|t|\geq t_0$, {\em where} $F(x,t)=\displaystyle\int^{t}_{0}f(x,\tau)d\tau$.
\end{enumerate}
The function $f(x,t)=\phi(x)\left(\theta t^{\theta-1}+r t^{r-1}\right)$, with $\phi\in L^\infty(\Omega)$ and $\phi>0$ a.e. in $\Omega$, verifies all assumptions $(f_1)-(f_2)$.

Problem \eqref{P} is driven by the so-called double phase operator, which switches between two different types of elliptic rates according to the coefficient $a(\cdot)$.
This kind of operator was introduced by Zhikov in \cite{Z1,Z2,Z3,ZK} in order to provide models for strongly anisotropic materials. Also, \eqref{P} falls into the class of problems driven by operators with non-standard growth conditions, according to Marcellini's definition given in \cite{M1,M2}. Following this direction, Mingione et al. prove different regularity results for minimizers of double phase functionals in \cite{BCM,CM1,CM2}. In \cite{CS}, Colasuonno and Squassina analyze the eigenvalue problem with Dirichlet boundary condition of the double phase operator. In particular, in \cite[Section 2]{CS} they provide the basic tools to solve variational problems like \eqref{P}, introducing the standard condition \eqref{cruciale}.
Recently, Mizuta and Shimomura study Hardy-Sobolev inequalities in the unit ball for double phase functionals in \cite{MS}.
While, for existence and multiplicity of solutions of nonlinear problems driven by the double phase operator, we refer to \cite{GLL,LD,PS}, with the help of variational techniques, and to \cite{GW1,GW2}, through a non-variational characterization.

Inspired by the above papers, we provide an existence result for \eqref{P} by variational method.
The main novelty, as well as the main difficulty, of problem \eqref{P} is the presence of a double phase Hardy potential. Indeed, such term is responsible of the lack of compactness of the Euler-Lagrange functional related to \eqref{P}.
In order to handle the double phase potential in \eqref{P}, our weight function $a:\overline{\Omega}\to[0,\infty)$ satisfies
\begin{enumerate}
\item[$(a)$] {\em $a(\lambda x)\leq a(x)$ for any $\lambda\in(0,1]$ and any $x\in\overline{\Omega}$.}
\end{enumerate}
A simple example of Lipschitz continuous function verifying $(a)$ is given by $a(x)=|x|$.
Also, we control parameter $\gamma$ with the Hardy constants
\begin{equation}\label{costante}
H_m:=\left(\frac{m}{N-m}\right)^{-m},
\end{equation}
when $m=p$ and $m=q$.
Thus, we are ready to introduce the main result of the paper.

\begin{theorem}\label{T1.1}
Let $\Omega\subset\mathbb R^N$ be an open, bounded set with Lipschitz boundary, $0\in\Omega$ and $N\geq2$.
Let $1<p<q<N$ and $a(\cdot)$ satisfy \eqref{cruciale} and $(a)$. Let $(f_1)-(f_2)$ hold true.
Then, for any $\gamma\in(-\infty,\min\{H_p,H_q\})$ problem \eqref{P} admits a non-trivial weak solution.
\end{theorem}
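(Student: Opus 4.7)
The natural strategy is to find weak solutions of \eqref{P} as critical points of the Euler-Lagrange functional
\begin{equation*}
J(u)=\int_\Omega\left(\frac{|\nabla u|^p}{p}+\frac{a(x)|\nabla u|^q}{q}\right)dx-\gamma\int_\Omega\left(\frac{|u|^p}{p\,|x|^p}+\frac{a(x)|u|^q}{q\,|x|^q}\right)dx-\int_\Omega F(x,u)\,dx
\end{equation*}
defined on $W^{1,\mathcal H}_0(\Omega)$, and then to apply the Mountain Pass Theorem of Ambrosetti-Rabinowitz. The $C^1$ regularity of $J$ follows from $(f_1)$ combined with the double phase Hardy inequalities of the paper (whose proof requires the monotonicity assumption $(a)$ on $a(\cdot)$) and the compact subcritical embedding $W^{1,\mathcal H}_0(\Omega)\hookrightarrow L^r(\Omega)$ available under condition \eqref{cruciale}. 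Critical points of $J$ are then weak solutions to \eqref{P} by a standard computation.

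For the mountain pass geometry I would argue as follows. When $0\leq\gamma<\min\{H_p,H_q\}$, the Hardy inequalities yield
\begin{equation*}
\int_\Omega\frac{|\nabla u|^p}{p}\,dx-\frac{\gamma}{p}\int_\Omega\frac{|u|^p}{|x|^p}\,dx\geq\frac{1}{p}\left(1-\frac{\gamma}{H_p}\right)\int_\Omega|\nabla u|^p\,dx
\end{equation*}
and an analogous bound for the $q$-phase, while for $\gamma<0$ the singular terms only reinforce coercivity. Together with $(f_1)$ (for $\varepsilon$ sufficiently small) and the continuous embedding into $L^r(\Omega)$, this delivers $J(u)\geq\alpha>0$ on a small sphere in $W^{1,\mathcal H}_0(\Omega)$. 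The second geometric condition is standard: $(f_2)$ implies $F(x,t)\geq c_1|t|^\theta-c_2$ with $\theta>q$, and since the $q$-phase energy of $tu_0$ grows only like $t^q$, this forces $J(tu_0)\to-\infty$ as $t\to\infty$ for any fixed nontrivial $u_0$.

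The main obstacle is the Palais-Smale condition, since the Hardy potential does not enjoy the compactness of the subcritical nonlinearity. For a PS sequence $\{u_n\}$, boundedness follows from the Ambrosetti-Rabinowitz argument: computing $\theta J(u_n)-\langle J'(u_n),u_n\rangle$ together with $(f_2)$ and the Hardy control on $\gamma$ produces a coercive bound on the double phase modular of $\nabla u_n$. Passing to a subsequence, $u_n\rightharpoonup u$ in $W^{1,\mathcal H}_0(\Omega)$, $u_n\to u$ in $L^r(\Omega)$, and $u_n\to u$ a.e.\ in $\Omega$. To upgrade to norm convergence, I would show that $\langle J'(u_n)-J'(u),u_n-u\rangle\to0$: the $F$-contribution vanishes by Vitali/dominated convergence using the growth in $(f_1)$; the two Hardy contributions are handled via a Brezis-Lieb type splitting together with the strict slack $1-\gamma/\min\{H_p,H_q\}>0$ coming from the bound on $\gamma$, which absorbs the residual Hardy integral of $u_n-u$ into the coercive part. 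What remains is only the double phase divergence term, whose $(S_+)$-type monotonicity then forces $u_n\to u$ in $W^{1,\mathcal H}_0(\Omega)$. The Mountain Pass Theorem thus produces a critical point at level $c\geq\alpha>0$, i.e.\ a nontrivial weak solution of \eqref{P}.
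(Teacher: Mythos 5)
Your outline of the variational setup, the mountain pass geometry, and the Ambrosetti--Rabinowitz boundedness argument matches the paper. The genuine gap is in the final step of the Palais--Smale verification, where you propose to finish with an $(S_+)$-type monotonicity argument for the double phase divergence term. To use $(S_+)$ you must first prove that
\begin{equation*}
\int_\Omega\left[\left(|\nabla u_n|^{p-2}\nabla u_n-|\nabla u|^{p-2}\nabla u\right)+a(x)\left(|\nabla u_n|^{q-2}\nabla u_n-|\nabla u|^{q-2}\nabla u\right)\right]\cdot(\nabla u_n-\nabla u)\,dx\to 0,
\end{equation*}
and the only way to extract this from $\langle J'(u_n)-J'(u),u_n-u\rangle\to0$ after removing the $F$-part is to compare the divergence term with the residual Hardy term of $u_n-u$ through Simon's inequality. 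For $p\geq2$ this gives the lower bound $\kappa_p^{-1}\|\nabla u_n-\nabla u\|_p^p$ (and analogously with $\kappa_q$), so after applying the Hardy inequality the absorption works only when $\gamma<\min\{H_p,H_q\}/\max\{\kappa_p,\kappa_q\}$, a strictly smaller range than the one claimed in Theorem~\ref{T1.1}; and for $1<p<2$ Simon's inequality does not give a direct pointwise lower bound at all. This is precisely the limitation the paper itself records in Lemma~\ref{L3.3} and Remark~\ref{osservazione}: the $(S_+)$/Liu--Dai route covers only $2\leq p<q<N$ and $\gamma\in(-\infty,K_{p,q}\min\{H_p,H_q\})$ with $K_{p,q}\leq1$.

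The missing idea is the a.e.\ convergence of the gradients, claim~\eqref{claim}, which the paper obtains by testing $J'_\gamma(u_j)$ against $\psi_R(u_j-u)$, where $\psi_R$ is a cut-off vanishing near the singular point $0$. Away from $B(0,R/2)$ the Hardy weights are bounded, so all Hardy contributions pass to the limit by dominated convergence, and Simon's inequality is used only \emph{locally} on $\Omega_R$ to upgrade to $\nabla u_j\to\nabla u$ in $[L^p(\Omega_R)]^N$; the constant $\kappa_p$ therefore disappears from the final estimate. With a.e.\ gradient convergence in hand, Br\'ezis--Lieb applies to the \emph{gradient} modulars (not just the Hardy ones, as in your proposal), and the exact cancellation in \eqref{3.12}--\eqref{Br} produces the clean identity
\begin{equation*}
\|\nabla u_j-\nabla u\|_p^p+\|\nabla u_j-\nabla u\|_{q,a}^q=\gamma\left(\|u_j-u\|_{H_p}^p+\|u_j-u\|_{H_{q,a}}^q\right)+o(1),
\end{equation*}
from which $\ell=0$ follows using only the Hardy inequality and $\gamma<\min\{H_p,H_q\}$, for all $1<p<q<N$. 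Without this localization-plus-Br\'ezis--Lieb-on-the-gradients step, your argument does not deliver the full range of $p$ and $\gamma$ asserted in the theorem.
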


The proof of Theorem \ref{T1.1} is based on the application of the classical mountain pass theorem. Also, Theorem \ref{T1.1} generalizes \cite[Theorem 1.3]{LD}, where the authors consider problem \eqref{P} with $\gamma=0$. However, our situation with $\gamma\neq0$ is much more delicate than \cite{LD}, because of the lack of compactness, as well explained in Remark \ref{osservazione}.  

The paper is organized as follows.
In Section \ref{sec2}, we introduce the basic properties of the Musielak-Orlicz and Musielak-Orlicz-Sobolev spaces, including also the new Hardy inequalities, and we set the variational structure of problem \eqref{P}. In Section \ref{sec3}, we prove Theorem \ref{T1.1}.

\section{Preliminaries}\label{sec2}

The function $\mathcal H:\Omega\times[0,\infty)\to[0,\infty)$ defined as
$$\mathcal H(x,t):=t^p+a(x)t^q,\quad\mbox{for a.e. }x\in\Omega\mbox{ and for any }t\in[0,\infty),
$$
with $1<p<q$ and $0\leq a(\cdot)\in L^1(\Omega)$, is a generalized N-function (N stands for {\em nice}), according to the definition in \cite{D,M}, and satisfies the so called $(\Delta_2)$ condition, that is
$$\mathcal H(x,2t)\leq t^q\mathcal H(x,t),\quad\mbox{for a.e. }x\in\Omega\mbox{ and for any }t\in[0,\infty).
$$
Therefore, by \cite{M} we can define the Musielak-Orlicz space $L^{\mathcal H}(\Omega)$ as
$$L^{\mathcal H}(\Omega):=\left\{u:\Omega\to\mathbb R\mbox{ measurable}:\,\,\varrho_{\mathcal H}(u)<\infty\right\},
$$
endowed with the Luxemburg norm
$$\|u\|_{\mathcal H}:=\inf\left\{\lambda>0:\,\,\varrho_{\mathcal H}\left(\frac{u}{\lambda}\right)\leq1\right\},$$
where $\varrho_{\mathcal H}$ denotes the $\mathcal H$-modular function, set as
\begin{equation}\label{rhoh}
\varrho_{\mathcal H}(u):=\int_\Omega\mathcal H(x,|u|)dx=\int_\Omega\left(|u|^p+a(x)|u|^q\right)dx.
\end{equation}
By \cite{CS,D}, the space $L^{\mathcal H}(\Omega)$ is a separable, uniformly convex, Banach space.
While, by \cite[Proposition 2.1]{LD} we have the following relation between the norm $\|\cdot\|_{\mathcal H}$ and the $\mathcal H$-modular.

\begin{proposition}\label{P2.1}
Assume that $u\in L^{\mathcal H}(\Omega)$, $\{u_j\}_j\subset
L^{\mathcal H}(\Omega)$ and $c>0$. Then
\begin{itemize}
\item[$(i)$] for $u\neq0$, $\|u\|_{\mathcal H}=c\Leftrightarrow\varrho_{\mathcal H}\left(\frac{u}{c}\right)=1$;
\vspace{0.1cm}
\item[$(ii)$] $\|u\|_{\mathcal H}<1$ $(resp.=1,\,>1)$ $\Leftrightarrow \varrho_{\mathcal H}(u)<1$ $(resp.=1,\,>1)$;
\vspace{0.1cm}
\item[$(iii)$] $\|u\|_{\mathcal H}<1\Rightarrow\|u\|_{\mathcal H}^q\leq\varrho_{\mathcal H}(u)\leq\|u\|_{\mathcal H}^p$;
\vspace{0.1cm}
\item[$(iv)$] $\|u\|_{\mathcal H}>1\Rightarrow\|u\|_{\mathcal H}^p\leq\varrho_{\mathcal H}(u)\leq\|u\|_{\mathcal H}^q$;
\vspace{0.1cm}
\item[$(v)$] $\lim\limits_{j\rightarrow\infty}\|u_j\|_{\mathcal H}=0\,(\infty)\Leftrightarrow\lim\limits_{j\rightarrow\infty}\varrho_{\mathcal H}(u_j)=0\,(\infty)$.
\end{itemize}
\end{proposition}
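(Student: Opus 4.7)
The plan is to reduce everything to the explicit scaling of the modular, namely
$$\varrho_{\mathcal H}(u/\lambda)=\lambda^{-p}\int_\Omega|u|^p\,dx+\lambda^{-q}\int_\Omega a(x)|u|^q\,dx,\qquad\lambda>0.$$
For $u\neq 0$ I would first check that this map is continuous and strictly decreasing in $\lambda$, running from $+\infty$ (as $\lambda\to 0^+$) to $0$ (as $\lambda\to+\infty$), where finiteness of $\int|u|^p$ and $\int a(x)|u|^q$ follows from $u\in L^{\mathcal H}(\Omega)$ together with $(\Delta_2)$. From this strict monotonicity, a unique $\lambda^\ast>0$ with $\varrho_{\mathcal H}(u/\lambda^\ast)=1$ exists, and the infimum defining the Luxemburg norm is attained exactly there, which yields (i). Item (ii) is then immediate by comparing $\varrho_{\mathcal H}(u)$ with $\varrho_{\mathcal H}(u/\lambda^\ast)=1$ along the same monotone map.

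For (iii) and (iv), I would set $c=\|u\|_{\mathcal H}>0$, $A=\int_\Omega|u|^p\,dx$ and $B=\int_\Omega a(x)|u|^q\,dx$, so that by (i) $Ac^{-p}+Bc^{-q}=1$. Multiplying this identity by $c^p$ and by $c^q$ gives the two exact formulas
$$A+Bc^{p-q}=c^p,\qquad Ac^{q-p}+B=c^q.$$
When $c<1$ one has $c^{p-q}>1$ and $c^{q-p}<1$, hence $A+B\le A+Bc^{p-q}=c^p$ and $A+B\ge Ac^{q-p}+B=c^q$, which is (iii); when $c>1$ both exponents flip, the two inequalities reverse, and (iv) follows. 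Item (v) is then an immediate corollary: if $\|u_j\|_{\mathcal H}\to 0$, eventually $\|u_j\|_{\mathcal H}<1$ and (iii) yields $\varrho_{\mathcal H}(u_j)\le\|u_j\|_{\mathcal H}^p\to 0$; conversely if $\varrho_{\mathcal H}(u_j)\to 0$, then (ii) gives $\|u_j\|_{\mathcal H}<1$ for large $j$, whence $\|u_j\|_{\mathcal H}^q\le\varrho_{\mathcal H}(u_j)\to 0$. The divergent case is handled identically via (iv).

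The argument is essentially mechanical once the scaling identity is in place; the only step that requires a little care is verifying the continuity and strict monotonicity of $\lambda\mapsto\varrho_{\mathcal H}(u/\lambda)$, because it is what makes the infimum defining $\|u\|_{\mathcal H}$ an attained minimum, and it is the place where the assumption $p<q$ enters in an essential way (ensuring strict monotonicity even when only one of $A$, $B$ is positive). This is the single mildly nontrivial point I anticipate; the rest of the proof is elementary algebra built on the identities above.
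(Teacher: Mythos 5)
Your proof is correct. Note that the paper itself does not prove this proposition: it is imported verbatim from Liu--Dai [LD, Proposition~2.1], and the paper simply cites it. So there is no in-paper argument to compare against; what you have written is a self-contained proof of the cited result.

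Your argument is the standard one for Luxemburg-norm versus modular estimates, and the mechanism is sound: the scaling identity $\varrho_{\mathcal H}(u/\lambda)=A\lambda^{-p}+B\lambda^{-q}$ with $A=\int_\Omega|u|^p\,dx$ and $B=\int_\Omega a|u|^q\,dx$, strict monotonicity in $\lambda$ (guaranteed by $A>0$ whenever $u\neq 0$), and the exact identities $A+Bc^{p-q}=c^p$, $Ac^{q-p}+B=c^q$ at $c=\|u\|_{\mathcal H}$, from which (iii)--(iv) drop out by comparing $c^{p-q}$ and $c^{q-p}$ with $1$. Two small remarks. First, invoking $(\Delta_2)$ to get $A,B<\infty$ is harmless but unnecessary here: with the paper's definition $L^{\mathcal H}(\Omega)=\{u:\varrho_{\mathcal H}(u)<\infty\}$, finiteness of $A$ and $B$ is immediate. (The $(\Delta_2)$ condition is what makes this definition coincide with the more general one requiring only $\varrho_{\mathcal H}(u/\lambda)<\infty$ for some $\lambda>0$, so mentioning it is defensible, just not needed for this step.) Second, for (ii) you should note the trivial case $u=0$, where $\|u\|_{\mathcal H}=0<1$ and $\varrho_{\mathcal H}(u)=0<1$ are both satisfied, so the equivalence still holds; your monotone-comparison argument tacitly assumes $u\neq 0$. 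Neither point is a gap, just tidying.
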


The related Sobolev space $W^{1,\mathcal H}(\Omega)$ is defined by
$$W^{1,\mathcal H}(\Omega):=\left\{u\in L^{\mathcal H}(\Omega):\,\,|\nabla u|\in L^{\mathcal H}(\Omega)\right\},
$$
endowed with the norm
\begin{equation}\label{norma}
\|u\|_{1,\mathcal H}:=\|u\|_{\mathcal H}+\|\nabla u\|_{\mathcal H},
\end{equation}
where we write $\|\nabla u\|_{\mathcal H}=\||\nabla u|\|_{\mathcal H}$ to simplify the notation.
We denote by $W^{1,\mathcal H}_0(\Omega)$ the completion of $C^\infty_0(\Omega)$ in $W^{1,\mathcal H}(\Omega)$ which can be endowed with the norm
$$\|u\|:=\|\nabla u\|_{\mathcal H},
$$
equivalent to the norm set in \eqref{norma}, thanks to \cite[Proposition 2.18(iv)]{CS} whenever \eqref{cruciale} holds true. 

For any $m\in[1,\infty)$ we indicate with $L^m(\Omega)$ the usual Lebesgue space equipped with the norm $\|\cdot\|_m$.
Then, by \cite[Proposition 2.15(ii)-(iii)]{CS} we have the following embeddings.

\begin{proposition}\label{P2.3}
Let \eqref{cruciale} holds true. For any $m\in[1,p^*]$ there exists $C_m=C(N,p,q,m,\Omega)>0$ such that
$$\|u\|_m^m\leq C_m\|u\|^m
$$
for any $u\in W^{1,\mathcal H}_0(\Omega)$.
Moreover, the embedding $W^{1,\mathcal H}_0(\Omega)\hookrightarrow L^m(\Omega)$ is compact for any $m\in[1,p^*)$.
\end{proposition}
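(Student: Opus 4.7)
The plan is to reduce both the continuous and the compact embeddings to the corresponding classical Sobolev embeddings for $W^{1,p}_0(\Omega)$ by comparing the modular $\mathcal H(x,t)=t^p+a(x)t^q$ with the simpler one $t^p$. Since $a(\cdot)\geq0$, one has $t^p\leq\mathcal H(x,t)$ pointwise, and the whole proof rests on turning this into the norm inequality $\|v\|_p\leq\|v\|_{\mathcal H}$ on $L^{\mathcal H}(\Omega)$.

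First I would establish this preliminary inequality via Proposition \ref{P2.1}. For $v\in L^{\mathcal H}(\Omega)\setminus\{0\}$, part $(i)$ gives $\varrho_{\mathcal H}\bigl(v/\|v\|_{\mathcal H}\bigr)=1$; since $|v|^p\leq\mathcal H(x,|v|)$ and $\varrho_{\mathcal H}$ is defined by \eqref{rhoh}, this yields
$$
\int_\Omega\frac{|v|^p}{\|v\|_{\mathcal H}^p}\,dx\;\leq\;\varrho_{\mathcal H}\!\left(\frac{v}{\|v\|_{\mathcal H}}\right)=1,
$$
so $\|v\|_p\leq\|v\|_{\mathcal H}$. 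Applying this to $v=|\nabla u|$ one gets $\|\nabla u\|_p\leq\|\nabla u\|_{\mathcal H}=\|u\|$ for every $u\in W^{1,\mathcal H}_0(\Omega)$. To legitimately use $\nabla u\in L^p(\Omega)$ in the sense of distributions, I would pick $\{u_j\}\subset C^\infty_0(\Omega)$ with $u_j\to u$ in $W^{1,\mathcal H}(\Omega)$ (by definition of $W^{1,\mathcal H}_0$), apply the inequality to the differences $u_j-u_k$ to conclude $\{u_j\}$ is Cauchy in $W^{1,p}_0(\Omega)$, and identify its $W^{1,p}_0$-limit with $u$. This delivers the continuous inclusion $W^{1,\mathcal H}_0(\Omega)\hookrightarrow W^{1,p}_0(\Omega)$.

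With this in hand, the classical Sobolev–Poincaré inequality gives $\|u\|_m\leq C_m'\,\|\nabla u\|_p$ for every $m\in[1,p^*]$ and every $u\in W^{1,p}_0(\Omega)$; chaining it with $\|\nabla u\|_p\leq\|u\|$ and raising to the $m$-th power produces the desired estimate $\|u\|_m^m\leq C_m\|u\|^m$ with $C_m:=(C_m')^m$. For the compactness statement, I would simply compose the continuous embedding $W^{1,\mathcal H}_0(\Omega)\hookrightarrow W^{1,p}_0(\Omega)$ with the Rellich–Kondrachov theorem: any bounded sequence in $W^{1,\mathcal H}_0(\Omega)$ is bounded in $W^{1,p}_0(\Omega)$, hence admits a subsequence converging strongly in $L^m(\Omega)$ for each $m\in[1,p^*)$.

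The only genuinely delicate point is the first step, because the passage from the pointwise modular bound $t^p\leq\mathcal H(x,t)$ to the Luxemburg-norm bound $\|v\|_p\leq\|v\|_{\mathcal H}$ is not automatic: one must exploit the characterization $\varrho_{\mathcal H}(v/\|v\|_{\mathcal H})=1$ of Proposition \ref{P2.1}$(i)$, which is precisely why the modular/norm dictionary is the right tool. Once this reduction is carried out, everything else is a routine concatenation of standard Lebesgue/Sobolev facts, and neither the $(\Delta_2)$ condition nor the bound $q/p<1+1/N$ in \eqref{cruciale} is needed for this particular statement (they enter only to guarantee that $\|\cdot\|=\|\nabla\cdot\|_{\mathcal H}$ is an equivalent norm on $W^{1,\mathcal H}_0(\Omega)$).
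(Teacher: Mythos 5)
Your proof is correct. Note that the paper does not actually prove Proposition \ref{P2.3}: it simply cites \cite[Proposition 2.15(ii)-(iii)]{CS}, and your argument --- passing from the pointwise bound $t^p\leq\mathcal H(x,t)$ to $\|v\|_p\leq\|v\|_{\mathcal H}$ via the unit-modular characterization, deducing the continuous inclusion $W^{1,\mathcal H}_0(\Omega)\hookrightarrow W^{1,p}_0(\Omega)$ by density, and then invoking Sobolev--Poincar\'e and Rellich--Kondrachov --- is precisely the standard reduction underlying that reference. Your closing observation that neither $(\Delta_2)$ nor the bound $q/p<1+1/N$ is needed for this particular chain of inequalities is also accurate, since the statement is phrased directly in terms of $\|u\|=\|\nabla u\|_{\mathcal H}$ and your estimate never uses the equivalence of $\|\cdot\|$ with $\|\cdot\|_{1,\mathcal H}$.
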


We point out that Proposition \ref{P2.3} holds true when $m=q$. Indeed, by \eqref{cruciale} and $q>1$, we have $N(q-p)<p<qp$ which implies that $q<p^*$.

While, we denote by $L^q_a(\Omega)$ the weighted space of all measurable functions $u:\Omega\to\mathbb R$ with the seminorm
$$
\|u\|_{q,a}:=\left(\int_\Omega a(x)|u|^q dx\right)^{1/q}<\infty.
$$
Using this further notation, in the next result we provide the Hardy inequalities for space $W^{1,\mathcal H}_0(\Omega)$. The proof of the lemma is inspired by \cite[Lemma 2.1]{GP}.

\begin{lemma}\label{L2.5}
Let \eqref{cruciale} and $(a)$ hold true. Then, for any $u\in W^{1,\mathcal H}_0(\Omega)$ we have
$$
\begin{aligned}
&H_p\|u\|_{H_p}^p\leq\|\nabla u\|_p^p,
\hspace{1.5cm}\mbox{with }\,\,\|u\|_{H_p}:=\int_\Omega\frac{|u|^p}{|x|^p}dx\\
&H_q\|u\|_{H_{q,a}}^q\leq\|\nabla u\|_{q,a}^q,
\hspace{1.1cm}\mbox{with }\,\,\|u\|_{H_{q,a}}:=\int_\Omega a(x)\frac{|u|^q}{|x|^q}dx,
\end{aligned}
$$
where $H_p$ and $H_q$ are given in \eqref{costante}.
\end{lemma}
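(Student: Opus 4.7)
The first inequality is classical. Indeed, since $\mathcal H(x,|\nabla u|)\geq|\nabla u|^p$, Proposition \ref{P2.1} together with the definition of $W^{1,\mathcal H}_0(\Omega)$ (completion of $C^\infty_0(\Omega)$ under $\|\nabla\cdot\|_{\mathcal H}$) gives $W^{1,\mathcal H}_0(\Omega)\hookrightarrow W^{1,p}_0(\Omega)$, so the standard Hardy inequality on $W^{1,p}_0(\Omega)$ (with sharp constant $H_p$, which holds thanks to $p<N$) yields the first bound.

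For the second inequality, the plan is to argue by density and integration by parts, keeping the weight $a(\cdot)$ inside all integrals. First I would reduce to $u\in C^\infty_0(\Omega)$, which is dense in $W^{1,\mathcal H}_0(\Omega)$. The starting identity is the pointwise computation
$$
\mbox{div}\!\left(\frac{x}{|x|^q}\right)=\frac{N-q}{|x|^q}\qquad\mbox{on }\mathbb R^N\setminus\{0\}.
$$
Multiplying by $a(x)|u|^q$ and integrating over $\Omega_\varepsilon:=\Omega\setminus B_\varepsilon(0)$, I would integrate by parts (licit since $a\in W^{1,\infty}(\Omega)$, $u\in C^\infty_0(\Omega)$, and the vector field is smooth on $\Omega_\varepsilon$) to get
$$
(N-q)\int_{\Omega_\varepsilon}\frac{a(x)|u|^q}{|x|^q}dx
=B_\varepsilon(u)-\int_{\Omega_\varepsilon}\frac{x\cdot\nabla a(x)}{|x|^q}|u|^qdx-q\int_{\Omega_\varepsilon}\frac{a(x)|u|^{q-2}u\,x\cdot\nabla u}{|x|^q}dx,
$$
where $B_\varepsilon(u)$ collects the boundary contributions ($u=0$ on $\partial\Omega$, while on $\partial B_\varepsilon$ the term is of order $\varepsilon^{N-q}$ because $a$ and $u$ are bounded near the origin and $q<N$, so $B_\varepsilon(u)\to0$).

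The key use of hypothesis $(a)$ enters next. By Rademacher's theorem $a$ is differentiable a.e., and at any such point $x_0\neq 0$, applying $(a)$ along the ray $\lambda\mapsto a(\lambda x_0)$ for $\lambda\nearrow1$ gives
$$
x_0\cdot\nabla a(x_0)=\lim_{\lambda\to1^-}\frac{a(x_0)-a(\lambda x_0)}{1-\lambda}\geq0.
$$
Hence the $\nabla a$-term has a favourable sign and, letting $\varepsilon\to0$ by dominated convergence,
$$
(N-q)\int_\Omega\frac{a(x)|u|^q}{|x|^q}dx\leq -q\int_\Omega\frac{a(x)|u|^{q-2}u\,x\cdot\nabla u}{|x|^q}dx\leq q\int_\Omega\frac{a(x)|u|^{q-1}|\nabla u|}{|x|^{q-1}}dx.
$$
Splitting $a=a^{(q-1)/q}\cdot a^{1/q}$ and applying H\"older's inequality with exponents $q/(q-1)$ and $q$ bounds the right-hand side by
$$
q\left(\int_\Omega a(x)\frac{|u|^q}{|x|^q}dx\right)^{(q-1)/q}\left(\int_\Omega a(x)|\nabla u|^qdx\right)^{1/q},
$$
and rearranging yields exactly $H_q\|u\|_{H_{q,a}}^q\leq\|\nabla u\|_{q,a}^q$ for $u\in C^\infty_0(\Omega)$. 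Finally I would extend to arbitrary $u\in W^{1,\mathcal H}_0(\Omega)$ via approximation by smooth functions and Fatou's lemma on the left-hand side.

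The main obstacle is the integration by parts step: verifying rigorously that $B_\varepsilon(u)\to 0$, that $a$ Lipschitz is enough to validate the product rule and boundary integration, and in particular deducing $x\cdot\nabla a\geq0$ a.e.\ from the pointwise monotonicity $(a)$. Once these technicalities are in place, the remaining H\"older/rearrangement step is routine and produces the sharp constant $H_q=((N-q)/q)^q$.
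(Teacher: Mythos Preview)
Your proof is correct, but it takes a genuinely different route from the paper's. The paper follows Garc\'ia Azorero--Peral and writes, for $u\in C^\infty_0(\Omega)$,
\[
|u(x)|^q=-\int_1^\infty\frac{d}{d\lambda}|u(\lambda x)|^q\,d\lambda=-q\int_1^\infty |u(\lambda x)|^{q-2}u(\lambda x)\,\nabla u(\lambda x)\cdot x\,d\lambda,
\]
multiplies by $a(x)/|x|^q$, integrates over $\mathbb R^N$, changes variables $y=\lambda x$, and then invokes $(a)$ \emph{directly} in the form $a(y/\lambda)\leq a(y)$ for $\lambda\geq1$ to pull the weight through; after integrating $\int_1^\infty\lambda^{-(N+1-q)}d\lambda=1/(N-q)$ one is in exactly the same position as you before the H\"older step. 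Your approach instead uses the divergence identity $\mbox{div}(x/|x|^q)=(N-q)/|x|^q$ and integration by parts, which produces an extra term $\int_\Omega |u|^q\,x\cdot\nabla a(x)/|x|^q\,dx$ that you discard by deducing the differential sign condition $x\cdot\nabla a\geq0$ a.e.\ from $(a)$ via Rademacher. Both arguments yield the same sharp constant. The paper's ray-integral method is slightly slicker in that it uses $(a)$ in its original pointwise form and avoids both the $\varepsilon$-ball excision and any appeal to the a.e.\ differentiability of $a$; your divergence-form argument, on the other hand, is more classical and makes the role of $(a)$ very transparent---it is precisely the radial monotonicity needed to give the $\nabla a$-term a good sign.
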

\begin{proof}
By \cite[Lemma 2.1]{GP}, \eqref{rhoh} and Proposition \ref{P2.1}, we know that
$$
\|u\|_{H_p}^p\leq\left(\frac{p}{N-p}\right)^p\|\nabla u\|_p^p,
$$
for any $u\in W^{1,\mathcal H}_0(\Omega)$.
Now, taking inspiration from \cite[Lemma 2.1]{GP}, let $u\in C^\infty_0(\Omega)$.
Then, we have
$$|u(x)|^q=-\int_1^\infty\frac{d}{d\lambda}|u(\lambda x)|^qd\lambda=-q\int_1^\infty |u(\lambda x)|^{q-2}u(\lambda x)\nabla u(\lambda x)\cdot x\,d\lambda
$$
a.e. in $\mathbb R^N$.
Hence, by H\"older inequality, $(a)$ and trivially extending $a(\cdot)$ in the whole space $\mathbb R^N$
$$
\begin{aligned}
\int_\Omega a(x)\frac{|u(x)|^q}{|x|^q}dx&=\int_{\mathbb R^N}a(x)\frac{|u(x)|^q}{|x|^q}dx\\
&=-q\int_1^\infty\int_{\mathbb R^N}a(x)\frac{|u(\lambda x)|^{q-2}u(\lambda x)}{|x|^{q-1}}\nabla u(\lambda x)\cdot\frac{x}{|x|}dx\,d\lambda\\
&=-q\int_1^\infty\int_{\mathbb R^N}\frac{1}{\lambda^{N+1-q}}\,a\left(\frac{y}{\lambda}\right)\frac{|u(y)|^{q-2}u(y)}{|y|^{q-1}}\nabla u(y)\cdot\frac{y}{|y|}dy\,d\lambda\\
&\leq q\int_1^\infty\frac{d\lambda}{\lambda^{N+1-q}}\int_{\mathbb R^N}a(y)\frac{|u(y)|^{q-1}}{|y|^{q-1}}|\nabla u(y)|dy\\
&\leq\frac{q}{N-q}\left(\int_\Omega a(y)\frac{|u(y)|^q}{|y|^q}dy\right)^{(q-1)/q}\left(\int_\Omega a(y)|\nabla u(y)|^qdy\right)^{1/q}.
\end{aligned}
$$
From this, we obtain
$$\|u\|_{H_{q,a}}^q\leq\left(\frac{q}{N-q}\right)^q\|\nabla u\|_{q,a}^q,
$$
which holds true for any $u\in W^{1,\mathcal H}_0(\Omega)$ by density, \eqref{rhoh} and Proposition \ref{P2.1}.
\end{proof}

We are now ready to introduce the variational setting for problem \eqref{P}.
We say that a function $u\in W^{1,\mathcal H}_0(\Omega)$  is a weak solution of \eqref{P} if
$$
\int_\Omega\left(|\nabla u|^{p-2}+a(x)|\nabla u|^{q-2}\right)\nabla u\cdot\nabla\varphi dx
=\gamma\int_\Omega\left(\frac{|u|^{p-2}u}{|x|^p}+a(x)\frac{|u|^{q-2}u}{|x|^q}\right)\varphi dx+\int_\Omega f(x,u)\varphi dx,
$$
for any $\varphi\in W^{1,\mathcal H}_0(\Omega)$.
Clearly, the weak solutions of \eqref{P} are exactly the critical points of the Euler-Lagrange functional $J_\gamma:W^{1,\mathcal H}_0(\Omega)\to\mathbb R$, given by
$$J_\gamma(u):=\frac{1}{p}\|\nabla u\|_p^p+\frac{1}{q}\|\nabla u\|_{q,a}^q
-\gamma\left(\frac{1}{p}\|u\|_{H_p}^p+\frac{1}{q}\|u\|_{H_{q,a}}^q\right)
-\int_\Omega F(x,u)dx,
$$
which is well defined and of class $C^{1}$ on $W^{1,\mathcal H}_0(\Omega)$.

\section{Proof of Theorem \ref{T1.1}}\label{sec3}
Throughout the section we assume that $\Omega\subset\mathbb R^N$ is an open, bounded set with Lipschitz boundary, $0\in\Omega$, $N\geq2$, $1<p<q<N$, \eqref{cruciale} and $(a)$ hold true, without further mentioning. Also, we denote with $t^+=\max\{t,0\}$ and $t^-=\max\{-t,0\}$ respectively the positive and negative parts of a number $t\in\mathbb R$.

We recall that functional $J_\gamma:W^{1,\mathcal H}_0(\Omega)\to\mathbb R$ fulfills the Palais-Smale condition $(PS)$ if any sequence $\{u_j\}_j\subset W^{1,\mathcal H}_0(\Omega)$ satisfying
\begin{equation}\label{3.7}
\{J_\gamma(u_j)\}_j\mbox{ is bounded and }J'_\gamma(u_j)\to 0\mbox{ in }\left(W^{1,\mathcal H}_0(\Omega)\right)^{*}\mbox{ as }j\rightarrow\infty,
\end{equation}
possesses a convergent subsequence in $W^{1,\mathcal H}_0(\Omega)$.

The verification of the $(PS)$ condition for $J_\gamma$ is fairly delicate, considering the contribution of the double phase Hardy potential. Indeed, even if $W^{1,\mathcal H}_0(\Omega)\hookrightarrow L^p(\Omega,|x|^{-p})$ and $W^{1,\mathcal H}_0(\Omega)\hookrightarrow L^q(\Omega,a(x)|x|^{-q})$ by Lemma \ref{L2.5}, these embeddings are not compact. For this, we exploit a suitable tricky step analysis combined with the celebrated Br\'ezis and Lieb lemma in~\cite[Theorem 1]{BL}, which can be applied in $W^{1,\mathcal H}_0(\Omega)$ if we first prove the convergence $\nabla u_j(x)\to\nabla u(x)$ a.e. in $\Omega$, as $j\to\infty$.

\begin{proposition}\label{P3.3}
Let $(f_1)-(f_2)$ hold true. Then, for any $\gamma\in(-\infty,\min\{H_p,H_q\})$ the functional $J_\gamma$ verifies the $(PS)$ condition.
\end{proposition}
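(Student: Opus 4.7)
My plan is to verify the $(PS)$ condition for $J_\gamma$ in three stages: boundedness of a $(PS)$ sequence, pointwise convergence of its gradients, and a Brezis--Lieb passage to the strong limit in $W^{1,\mathcal H}_0(\Omega)$.

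\textbf{Stage 1 (boundedness).} Starting from a $(PS)$ sequence $\{u_j\}_j$ as in \eqref{3.7}, the plan is to form the combination $J_\gamma(u_j)-\frac{1}{\theta}\langle J'_\gamma(u_j),u_j\rangle$ and use $(f_2)$ to discard the nonlinear part as a term that is bounded below on $\{|u_j|\ge t_0\}$ and integrable on $\{|u_j|<t_0\}$. What remains, up to an $O(1)+o(1)\|u_j\|$ contribution, is
\begin{equation*}
\left(\tfrac{1}{p}-\tfrac{1}{\theta}\right)\|\nabla u_j\|_p^p+\left(\tfrac{1}{q}-\tfrac{1}{\theta}\right)\|\nabla u_j\|_{q,a}^q-\gamma\left(\tfrac{1}{p}-\tfrac{1}{\theta}\right)\|u_j\|_{H_p}^p-\gamma\left(\tfrac{1}{q}-\tfrac{1}{\theta}\right)\|u_j\|_{H_{q,a}}^q.
\end{equation*}
I would then absorb the Hardy pieces into the gradient pieces by Lemma \ref{L2.5}, and exploit $\gamma<\min\{H_p,H_q\}$ to obtain strictly positive coefficients for both $\|\nabla u_j\|_p^p$ and $\|\nabla u_j\|_{q,a}^q$. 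Since the sum of these two quantities equals $\varrho_{\mathcal H}(|\nabla u_j|)$, Proposition \ref{P2.1}(iii)--(iv) forces $\|u_j\|$ to be bounded. By reflexivity of $W^{1,\mathcal H}_0(\Omega)$, Proposition \ref{P2.3} and a standard diagonal extraction, I pass to a subsequence with $u_j\rightharpoonup u$ weakly in $W^{1,\mathcal H}_0(\Omega)$, $u_j\to u$ strongly in every $L^m(\Omega)$ with $m\in[1,p^*)$, and $u_j\to u$ a.e. in $\Omega$.

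\textbf{Stage 2 (a.e. convergence of the gradients).} This is the step where the main difficulty lies, precisely because the two Hardy embeddings supplied by Lemma \ref{L2.5} are not compact. I would adopt a Boccardo--Murat type argument: for each fixed $k>0$, test the difference $J'_\gamma(u_j)-J'_\gamma(u)$ against the truncation $T_k(u_j-u)$, and exploit the monotonicity of the vector field $\xi\mapsto(|\xi|^{p-2}+a(x)|\xi|^{q-2})\xi$. The subcritical term $\int(f(x,u_j)-f(x,u))T_k(u_j-u)\,dx$ vanishes in the limit via $(f_1)$ and the compact embedding of Proposition \ref{P2.3}. For the Hardy pieces, I would use that $|T_k(u_j-u)|\le k$ is uniformly bounded and goes to zero a.e., splitting each integral over a small ball $B_\delta(0)$ and its complement: the first piece is small uniformly in $j$ because $|x|^{-p}$ and $a(x)|x|^{-q}$ are integrable at the origin (as $p,q<N$), while the second piece goes to $0$ by dominated convergence away from the singularity. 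Hölder's inequality with the Hardy bound from Lemma \ref{L2.5} then shows that the Hardy contributions are $o(1)$, so that the monotone integrand is driven to zero in $L^1$ on the truncation set. Standard Boccardo--Murat machinery yields a subsequence with $\nabla u_j\to\nabla u$ a.e. in $\Omega$.

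\textbf{Stage 3 (Brezis--Lieb and conclusion).} Once pointwise convergence of gradients is available, the Brezis--Lieb lemma applies to each of the four modular pieces $\|\nabla u_j\|_p^p$, $\|\nabla u_j\|_{q,a}^q$, $\|u_j\|_{H_p}^p$ and $\|u_j\|_{H_{q,a}}^q$, giving decompositions of the form $X(u_j)=X(u_j-u)+X(u)+o(1)$. Plugging these into $\langle J'_\gamma(u_j),u_j\rangle-\langle J'_\gamma(u),u\rangle=o(1)$ and using $(f_1)$ together with strong $L^q$- and $L^r$-convergence to eliminate the $f$-terms, I would arrive at
\begin{equation*}
\|\nabla(u_j-u)\|_p^p+\|\nabla(u_j-u)\|_{q,a}^q=\gamma\bigl(\|u_j-u\|_{H_p}^p+\|u_j-u\|_{H_{q,a}}^q\bigr)+o(1).
\end{equation*}
A final application of Lemma \ref{L2.5} and of the strict bound $\gamma<\min\{H_p,H_q\}$ forces the left-hand side to vanish, hence $\varrho_{\mathcal H}(|\nabla(u_j-u)|)\to 0$, and Proposition \ref{P2.1}(v) yields the desired strong convergence $u_j\to u$ in $W^{1,\mathcal H}_0(\Omega)$. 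The whole argument hinges on Stage 2; the remaining stages are essentially bookkeeping tied together by the modular-norm equivalence and the Hardy inequalities.
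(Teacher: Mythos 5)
Your plan is correct, and it reaches the conclusion by a genuinely different route in the step that matters most. Stages 1 and 3 mirror the paper's proof closely: the $(f_2)$-combination $J_\gamma - \theta^{-1}\langle J'_\gamma,\cdot\rangle$ together with Lemma \ref{L2.5} and Proposition \ref{P2.1} gives boundedness, and the final Brezis--Lieb decomposition plus the strict inequality $\gamma<\min\{H_p,H_q\}$ forces $\varrho_{\mathcal H}(\nabla(u_j-u))\to 0$. (One bookkeeping note in Stage 3: your identity $\langle J'_\gamma(u_j),u_j\rangle-\langle J'_\gamma(u),u\rangle=o(1)$ presupposes $\langle J'_\gamma(u),u\rangle=0$, which requires first passing to the limit in $\langle J'_\gamma(u_j),\varphi\rangle$ using the a.e. convergence of gradients to establish $J'_\gamma(u)=0$; the paper sidesteps this by working directly with $\langle J'_\gamma(u_j),u_j-u\rangle=o(1)$ and computing the cross-terms via weak convergence of $|\nabla u_j|^{p-2}\nabla u_j$, but your version is easily repaired and equivalent.)

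The genuine divergence is Stage 2. You propose a Boccardo--Murat truncation argument, testing $J'_\gamma(u_j)-J'_\gamma(u)$ against $T_k(u_j-u)$ and controlling the singular Hardy terms by splitting $\Omega$ into $B_\delta(0)$ and its complement, using H\"older together with the local integrability of $|x|^{-p}$ and $a(x)|x|^{-q}$ (valid since $p,q<N$ and $a$ is bounded, indeed Lipschitz) to make the inner contribution uniformly small and dominated convergence on the outer region. The paper instead localizes in \emph{space} rather than in \emph{value}: it multiplies by a cut-off $\psi_R$ that vanishes on $B(0,R/2)$, so that the Hardy kernels are simply bounded on $\operatorname{supp}\psi_R$ and dominated convergence applies without any H\"older trick; this produces $\nabla u_j\to\nabla u$ in $L^p(\Omega_R)$ for each $R>0$ via the Simon inequalities, followed by a diagonalization over $R$. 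Both devices reach $\nabla u_j\to\nabla u$ a.e. in $\Omega$ (which is the key hypothesis needed to invoke Brezis--Lieb for the gradient modulars). Your truncation route is the more classical one and avoids the diagonalization, at the price of a slightly more delicate treatment of the singularity near the origin; the paper's spatial cut-off removes the singularity by construction, at the price of an extra localization parameter. Either works, and the remaining three conclusions (the limits \eqref{LG10}, \eqref{LG11} and \eqref{hardy}, or your version of them) follow in the same way once a.e. convergence of gradients is in hand.
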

\begin{proof}
Let us fix $\gamma\in(-\infty,\min\{H_p,H_q\})$ and let $\{u_j\}_j\subset W^{1,\mathcal H}_0(\Omega)$ be a sequence satisfying \eqref{3.7}. 

We first show that $\{u_j\}_j$ is bounded in $W^{1,\mathcal H}_0(\Omega)$, arguing by contradiction. Then, going to a subsequence, still denoted by $\{u_j\}_j$, we have $\lim\limits_{j\to\infty}\|u_j\|=\infty$ and $\|u_j\|\geq1$ for any $j\geq n$, with $n\in\mathbb N$ sufficiently large.
Thus, according to $(f_2)$ and Lemma \ref{L2.5}, we get
\begin{align}\label{3.9}
J_\gamma(u_j)-\frac{1}{\theta}\langle J'_\gamma(u_j), u_j\rangle
=&\left(\frac{1}{p}-\frac{1}{\theta}\right)\|\nabla u_j\|_p^p
+\left(\frac{1}{q}-\frac{1}{\theta}\right)\|\nabla u_j\|_{q,a}^q
-\gamma\left(\frac{1}{p}-\frac{1}{\theta}\right)\|u_j\|_{H_p}^p
\nonumber\\
&-\gamma\left(\frac{1}{q}-\frac{1}{\theta}\right)\|u_j\|_{H_{q,a}}^q
-\int_{\Omega}\left[F(x,u_j)-\frac{1}{\theta}f(x,u_j)u_j\right]dx\nonumber\\
\geq&\left(\frac{1}{p}-\frac{1}{\theta}\right)\left(1-\frac{\gamma^+}{H_p}\right)\|\nabla u_j\|_p^p
+\left(\frac{1}{q}-\frac{1}{\theta}\right)\left(1-\frac{\gamma^+}{H_q}\right)\|\nabla u_j\|_{q,a}^q\\
&-\int_{\Omega_{t_0}}\left[F(x,u_j)-\frac{1}{\theta}f(x,u_j)u_j\right]^+dx\nonumber\\
\geq&\left(\frac{1}{q}-\frac{1}{\theta}\right)\left(1-\frac{\gamma^+}{\min\{H_p,H_q\}}\right)
\varrho_{\mathcal H}(\nabla u_j)
-D,\nonumber
\end{align}
since $\theta>q>p$ by $(f_2)$, where
$$
\Omega_{t_0}=\left\{x\in\Omega:\,\,|u_j(x)|\leq t_0\right\}\quad\mbox{ and }\quad
D=|\Omega|\sup_{x\in\Omega,|t|\leq t_0}\left[F(x,t)-\frac{1}{\theta}f(x,t)t\right]^+<\infty,
$$
with the last inequality which is consequence of $(f_1)$.
Thus, by \eqref{3.7} there exist $c_1$, $c_2>0$ such that \eqref{3.9} and Proposition \ref{P2.1} yield at once that as $j\rightarrow\infty$,
$$
c_1+c_2\|u_j\|+o(1)\geq
\left(\frac{1}{q}-\frac{1}{\theta}\right)\left(1-\frac{\gamma^+}{\min\{H_p,H_q\}}\right)
\|u_j\|^p
-D
$$
giving the desired contradiction, since $\theta>q>p>1$ and $\gamma<\min\{H_p,H_q\}$.

Hence, $\{u_j\}_j$ is bounded in $W^{1,\mathcal H}_0(\Omega)$.
By Propositions \ref{P2.1}-\ref{P2.3}, Lemma \ref{L2.5}, \cite[Theorem 4.9]{B} and the reflexivity of $W^{1,\mathcal H}_0(\Omega)$, there exists a subsequence, still denoted by $\{u_j\}_j$, and $u\in W^{1,\mathcal H}_0(\Omega)$ such that
\begin{equation}\label{3.10}
\begin{gathered}
u_j\rightharpoonup u\mbox{ in }W^{1,\mathcal H}_0(\Omega),\qquad
\nabla u_j\rightharpoonup\nabla u\mbox{ in }\left[L^{\mathcal H}(\Omega)\right]^N,\\
u_j\rightharpoonup u\mbox{ in }L^p(\Omega,|x|^{-p}),\quad
u_j\rightharpoonup u\mbox{ in }L^q(\Omega\setminus A,a(x)|x|^{-q}),\quad
\|u_j-u\|_{H_p}^p+\|u_j-u\|_{H_{q,a}}^q\to\ell,\\
u_j\to u\mbox{ in }L^m(\Omega),
\qquad u_j(x)\rightarrow u(x)\mbox{ a.e. in }\Omega,
\qquad |u_j(x)|\leq h(x)\mbox{ a.e. in }\Omega,
\end{gathered}
\end{equation}
as $j\to\infty$, with $m\in[1,p^*)$, $h\in L^q(\Omega)$ and $A$ is the nodal set of weight $a(\cdot)$, given by
$$
A:=\left\{x\in\Omega:\,\,a(x)=0\right\}.
$$
Indeed, since $a(\cdot)$ is a Lipschitz continuous function by \eqref{cruciale}, then $\Omega\setminus A$ is an open subset of $\mathbb R^N$. Also, $h\in L^q(\Omega)$ by Proposition \ref{P2.3} and \cite[Theorem 4.9]{B}, since $q<p^*$ by \eqref{cruciale}.

Now, we claim that 
\begin{equation}\label{claim}
\nabla u_j(x)\to\nabla u(x)\mbox{ a.e. in }\Omega,\mbox{ as }j\to\infty.
\end{equation}
Let $\varphi\in C^\infty(\mathbb R^N)$ be a cut-off function with $0\leq\varphi\leq1$, $\varphi\equiv1$ in $B(0,1/2)$ and $\varphi\equiv0$ in $B(0,1)$. Then, we define $\psi_R(x)=1-\varphi(x/R)$ for any $R>0$, so that $\psi_R\in C^\infty(\mathbb R^N)$ with $0\leq\psi_R\leq1$, $\psi_R\equiv1$ in $\mathbb R^N\setminus B(0,R)$, $\psi_R\equiv0$ in $B(0,R/2)$ and the sequence $\{\psi_R u_j\}_j$ is bounded in $W^{1,\mathcal H}_0(\Omega)$, thanks to Proposition \ref{P2.1}.
By simple calculation, for any $j\in\mathbb N$ we have
\begin{equation}\label{LG1}
\begin{aligned}
\langle J'_\gamma(u_j),\psi_R(u_j-u)\rangle=&
\int_\Omega\psi_R\left(|\nabla u_j|^{p-2}\nabla u_j+a(x)|\nabla u_j|^{q-2}\nabla u_j\right)\cdot(\nabla u_j-\nabla u)dx\\
&+\int_\Omega\left(|\nabla u_j|^{p-2}\nabla u_j+a(x)|\nabla u_j|^{q-2}\nabla u_j\right)\cdot\nabla \psi_R(u_j-u) dx\\
&-\gamma\int_\Omega\psi_R\left(\frac{|u_j|^{p-2}u_j}{|x|^p}+a(x)\frac{|u_j|^{q-2}u_j}{|x|^q}\right)(u_j-u)dx\\
&-\int_\Omega \psi_R f(x,u_j)(u_j-u)dx.
\end{aligned}
\end{equation}
Of course, all integrals in \eqref{LG1} are zero whenever $\overline{\Omega}\subset B(0,R/2)$, since $\psi_R\equiv0$ in $B(0,R/2)$. Thus, let us consider $R>0$ sufficiently small such that 
\begin{equation}\label{epsilon}
\left[\mathbb R^N\setminus B(0,R/2)\right]\cap\overline{\Omega}\neq\emptyset.
\end{equation}
By H\"older inequality, \eqref{3.10}, the facts that $\psi_R\in C^\infty(\mathbb R^N)$, $a(\cdot)$ is continuous in $\overline{\Omega}$ and $\{u_j\}_j$ is bounded in $W^{1,\mathcal H}_0(\Omega)$, we get
\begin{equation}\label{LG2}
\begin{aligned}
\int_\Omega&\left(|\nabla u_j|^{p-2}\nabla u_j+a(x)|\nabla u_j|^{q-2}\nabla u_j\right)\cdot\nabla \psi_R(u_j-u)dx\\
&\leq C\left(\|\nabla u_j\|_p^{p-1}\|u_j-u\|_p+\|\nabla u_j\|_{q,a}^{q-1}\|u_j-u\|_{q,a}\right)
\leq \widetilde{C}\left(\|u_j-u\|_p+\|u_j-u\|_q\right)\to0,
\end{aligned}
\end{equation}
as $j\to\infty$, for suitable $C$, $\widetilde{C}>0$.
Similarly, by considering also $(f_1)$ with $\varepsilon=1$, we obtain
\begin{equation}\label{LG3}
\begin{aligned}
\left|\int_\Omega \psi_R f(x,u_j)(u_j-u)dx\right|
&\leq \int_\Omega\left(q|u_j|^{q-1}+r\delta_1|u_j|^{r-1}\right)|u_j-u|dx\\
&\leq C\left(\|u_j-u\|_q+\|u_j-u\|_r\right)\to 0
\end{aligned}
\end{equation}
as $j\to\infty$, for a suitable $C>0$.
Furthermore, by \eqref{3.10} and \cite[Proposition~A.8]{AP}, considering that $a(\cdot)>0$ in $\Omega\setminus A$, we have
$$|u_j|^{p-2}u_j\rightharpoonup |u|^{p-2}u\,\mbox{  in }\,L^{p'}(\Omega,|x|^{-p}),\qquad
|u_j|^{q-2}u_j\rightharpoonup |u|^{q-2}u\,\mbox{  in }\,L^{q'}(\Omega\setminus A,a(x)|x|^{-q})
$$
so that
\begin{equation}\label{LG4}
\begin{aligned}
&\lim_{j\to\infty}\int_\Omega\psi_R\frac{|u_j|^{p-2}u_j}{|x|^p}u dx=
\int_\Omega\psi_R\frac{|u|^p}{|x|^p}dx,\\
&\lim_{j\to\infty}\int_\Omega\psi_R\, a(x)\frac{|u_j|^{q-2}u_j}{|x|^q}u dx=
\lim_{j\to\infty}\int_{\Omega\setminus A}\psi_R\, a(x)\frac{|u_j|^{q-2}u_j}{|x|^q}u dx=
\int_{\Omega\setminus A}\psi_R\,a(x)\frac{|u|^q}{|x|^q}dx\\
&\hspace{4.8cm}=\int_\Omega\psi_R\,a(x)\frac{|u|^q}{|x|^q}dx.
\end{aligned}
\end{equation}
While, by \eqref{3.10} it follows that
$$
\psi_R(x)\frac{|u_j(x)|^p}{|x|^p}\leq \left(\frac{2}{p}\right)^{p}|u_j(x)|^p\leq\left(\frac{2}{p}\right)^{p}h^p(x)
\quad\mbox{a.e in }\Omega\setminus B(0,R/2),
$$ 
so that, since $\psi_R\equiv0$ in $B(0,R/2)$, the dominated convergence theorem gives 
\begin{equation}\label{LG5}
\lim_{j\to\infty}\int_\Omega\psi_R\frac{|u_j|^p}{|x|^p}dx
=\lim_{j\to\infty}\int_{\Omega\setminus B(0,R/2)}\psi_R\frac{|u_j|^p}{|x|^p}dx
=\int_{\Omega\setminus B(0,R/2)}\psi_R\frac{|u|^p}{|x|^p}dx
=\int_\Omega\psi_R\frac{|u|^p}{|x|^p}dx.
\end{equation}
Similarly, by using also \eqref{cruciale}, for a suitable constant $L>0$ we get
$$
\psi_R(x)a(x)\frac{|u_j(x)|^q}{|x|^q}\leq L\left(\frac{2}{q}\right)^{q}h^q(x)
\quad\mbox{a.e in }\Omega\setminus B(0,R/2),
$$
which yields joint with the dominated convergence theorem 
\begin{equation}\label{LG5b}
\lim_{j\to\infty}\int_\Omega\psi_R\,a(x)\frac{|u_j|^q}{|x|^q}dx
=\int_\Omega\psi_R\,a(x)\frac{|u|^q}{|x|^q}dx.
\end{equation}
Thus, by \eqref{3.7}, \eqref{LG1}, \eqref{LG2}-\eqref{LG5b}, we obtain
$$\lim_{j\to\infty}\int_\Omega\psi_R\left(|\nabla u_j|^{p-2}\nabla u_j+a(x)|\nabla u_j|^{q-2}\nabla u_j\right)\cdot(\nabla u_j-\nabla u)dx=0.
$$
By H\"older inequality and being $\psi_R\leq 1$, we see that functional
$$
G:g\in\left[L^{\mathcal H}(\Omega)\right]^N\mapsto\int_\Omega\psi_R\left(|\nabla u|^{p-2}\nabla u+a(x)|\nabla u|^{q-2}\nabla u\right)\cdot g\,dx
$$
is linear and bounded.
Hence, by \eqref{3.10} we have
$$
\lim_{j\to\infty}\int_\Omega\psi_R\left(|\nabla u|^{p-2}\nabla u+a(x)|\nabla u|^{q-2}\nabla u\right)\cdot(\nabla u_j-\nabla u)dx=0,
$$
so that, denoting $\Omega_R:=\left\{x\in\Omega:\,\,|x|>R\right\}$ for any $R>0$, we get
\begin{equation}\label{LG6}
\begin{aligned}
&\lim_{j\to\infty}\int_{\Omega_R}\left[|\nabla u_j|^{p-2}\nabla u_j-|\nabla u|^{p-2}\nabla u
+a(x)\left(|\nabla u_j|^{q-2}\nabla u_j-|\nabla u|^{q-2}\nabla u\right)\right]\!\cdot\!(\nabla u_j-\nabla u)dx\\
&\leq\lim_{j\to\infty}\int_\Omega\psi_R\left[|\nabla u_j|^{p-2}\nabla u_j-|\nabla u|^{p-2}\nabla u
+a(x)\left(|\nabla u_j|^{q-2}\nabla u_j-|\nabla u|^{q-2}\nabla u\right)\right]\!\cdot\!(\nabla u_j-\nabla u)dx\\
&=0
\end{aligned}
\end{equation}
since $\psi_R\equiv1$ in $\mathbb R^N\setminus B(0,R)$.
Now, we recall the well known Simon inequalities, see \cite{S}, such that
\begin{equation}\label{simon}
|\xi-\eta|^m\leq
\begin{cases}
\kappa_m\,(|\xi|^{m-2}\xi-|\eta|^{m-2}\eta)\cdot(\xi-\eta), & \mbox{if $m\geq2$,}\\
\kappa_m\left[(|\xi|^{m-2}\xi-|\eta|^{m-2}\eta)\cdot(\xi-\eta)\right]^{m/2}\left(|\xi|^m+|\eta|^m\right)^{(2-m)/2}, & \mbox{if $1<m<2$,}
\end{cases}
\end{equation}
for any $\xi$, $\eta\in\mathbb R^N$, with $\kappa_m>0$ a suitable constant.
Therefore, if $p\geq2$ by \eqref{simon} we have
\begin{equation}\label{LG7}
\int_{\Omega_R}|\nabla u_j-\nabla u|^pdx
\leq \kappa_p\int_{\Omega_R}\left(|\nabla u_j|^{p-2}\nabla u_j-|\nabla u|^{p-2}\nabla u\right)\cdot(\nabla u_j-\nabla u)dx.
\end{equation}
While, if $1<p<2$ by \eqref{simon} and the H\"older inequality we obtain
\begin{equation}\label{LG8}
\begin{aligned}
&\int_{\Omega_R}|\nabla u_j-\nabla u|^pdx\\
&\,\,\,\,\leq \kappa_p
\int_{\Omega_R}\left[\left(|\nabla u_j|^{p-2}\nabla u_j-|\nabla u|^{p-2}\nabla u\right)\cdot(\nabla u_j-\nabla u)\right]^{p/2}
\left(|\nabla u_j|^p+|\nabla u|^p\right)^{(2-p)/2}dx\\
&\,\,\,\,\leq \kappa_p
\left[\int_{\Omega_R}\left(|\nabla u_j|^{p-2}\nabla u_j-|\nabla u|^{p-2}\nabla u\right)\cdot(\nabla u_j-\nabla u)dx\right]^{p/2}
\left(\|\nabla u_j\|_p^p+\|\nabla u\|_p^p\right)^{(2-p)/2}\\
&\,\,\,\,\leq \widetilde{\kappa_p}
\left[\int_{\Omega_R}\left(|\nabla u_j|^{p-2}\nabla u_j-|\nabla u|^{p-2}\nabla u\right)\cdot(\nabla u_j-\nabla u)dx\right]^{p/2}
\end{aligned}
\end{equation}
where the last inequality follows by the boundedness of $\{u_j\}_j$ in $W^{1,\mathcal H}_0(\Omega)$ and Proposition \ref{P2.1}, with a suitable new $\widetilde{\kappa_p}>0$.
Also, by convexity and since $a(x)\geq0$ a.e. in $\Omega$ by \eqref{cruciale}, we have
\begin{equation}\label{LG9}
a(x)\left(|\nabla u_j|^{q-2}\nabla u_j-|\nabla u|^{q-2}\nabla u\right)\cdot(\nabla u_j-\nabla u)
\geq0\mbox{ a.e. in }\Omega.
\end{equation}
Thus, combining \eqref{LG6}, \eqref{LG7}-\eqref{LG9} we prove that $\nabla u_j\to\nabla u$ in $[L^p(\Omega_R)]^N$ as $j\to\infty$, whenever $R>0$ satisfies \eqref{epsilon}. However, when $\overline{\Omega}\subset B(0,R/2)$ we have $\Omega_R=\emptyset$. Thus, for any $R>0$ the sequence $\nabla u_j\to\nabla u$ in $[L^p(\Omega_R)]^N$ as $j\to\infty$, and by diagonalization we prove claim \eqref{claim}.

Since the sequence $\{|\nabla u_j|^{p-2}\nabla u_j\}_j$ is bounded in $L^{p'}(\Omega)$, by \eqref{claim} we get
\begin{equation}\label{LG10}
\lim_{j\to\infty}\int_\Omega|\nabla u_j|^{p-2}\nabla u_j\cdot\nabla u\,dx=\|\nabla u\|_p^p.
\end{equation}
While, since $\{|\nabla u_j|^{q-2}\nabla u_j\}_j$ is bounded in $L^{q'}(\Omega\setminus A,a(x))$, by \eqref{claim} and \cite[Proposition A.8]{AP}
\begin{equation}\label{LG11}
\lim_{j\to\infty}\int_\Omega a(x)|\nabla u_j|^{q-2}\nabla u_j\cdot\nabla u\,dx=
\lim_{j\to\infty}\int_{\Omega\setminus A} a(x)|\nabla u_j|^{q-2}\nabla u_j\cdot\nabla u\,dx=\|\nabla u\|_{q,a}^q.
\end{equation}
Also, arguing as in \eqref{LG3} and \eqref{LG4}, we can prove
\begin{equation}\label{hardy}
\begin{aligned}
&\lim_{j\to\infty}\int_\Omega f(x,u_j)(u_j-u)dx=0,\\
&\lim_{j\to\infty}\int_\Omega\left(\frac{|u_j|^{p-2}u_j}{|x|^p}u+a(x)\frac{|u_j|^{q-2}u_j}{|x|^q}u\right)dx=
\|u\|_{H_p}^p+\|u\|_{H_{q,a}}^q.
\end{aligned}
\end{equation}
Furthermore, using \eqref{3.10}, \eqref{claim} and the Br\'ezis and Lieb lemma in~\cite[Theorem 1]{BL}, we obtain
\begin{equation}\label{Br}
\begin{aligned}
&\|\nabla u_j\|_p^p-\|\nabla u_j-\nabla u\|_p^p=\|\nabla u\|_p^p+o(1),
\hspace{0.6cm}\|\nabla u_j\|_{q,a}^q-\|\nabla u_j-\nabla u\|_{q,a}^q=\|\nabla u\|_{q,a}^q+o(1),\\
&\|u_j\|_{H_p}^p-\|u_j-u\|_{H_p}^p=\|u\|_{H_p}^p+o(1),
\hspace{1.15cm}\|u_j\|_{H_{q,a}}^q-\|u_j-u\|_{H_{q,a}}^q=\|u\|_{H_{q,a}}^q+o(1)
\end{aligned}
\end{equation}
as $j\to\infty$.  
Thus, by \eqref{3.7}, \eqref{LG10}, \eqref{LG11} and \eqref{hardy}, we get
\begin{equation}\label{3.12}
\begin{aligned}
o(1)=\langle J'_\gamma(u_j),u_j-u\rangle=&
\int_\Omega\left(|\nabla u_j|^{p-2}\nabla u_j+a(x)|\nabla u_j|^{q-2}\nabla u_j\right)\cdot(\nabla u_j-\nabla u)dx\\
&-\gamma\int_\Omega\left(\frac{|u_j|^{p-2}u_j}{|x|^p}+a(x)\frac{|u_j|^{q-2}u_j}{|x|^q}\right)(u_j-u)dx\\
&-\int_\Omega f(x,u_j)(u_j-u)dx\\
=&\|\nabla u_j\|_p^p-\|\nabla u\|_p^p
+\|\nabla u_j\|_{q,a}^p-\|\nabla u\|_{q,a}^p\\
&-\gamma\left(\|u_j\|_{H_p}^p-\|u\|_{H_p}^p+\|u_j\|_{H_{q,a}}^q-\|u\|_{H_{q,a}}^q\right)+o(1)
\end{aligned}
\end{equation}
as $j\to\infty$. 
Hence, by \eqref{Br} it follows that
\begin{equation}\label{formula}
\|\nabla u_j-\nabla u\|_p^p+\|\nabla u_j-\nabla u\|_{q,a}^q
=\gamma\left(\|u_j-u\|_{H_p}^p+\|u_j-u\|_{H_{q,a}}^q\right)+o(1)=\gamma\ell+o(1)
\end{equation}
as $j\to\infty$.
Now, assume for contradiction that $\ell>0$. Then, from Lemma \ref{L2.5}, \eqref{formula} and the fact that $\gamma<\min\{H_p,H_q\}$, we have
$$
\begin{aligned}
&\lim_{j\to\infty}\|\nabla u_j-\nabla u\|_p^p+\lim_{j\to\infty}\|\nabla u_j-\nabla u\|_{q,a}^q
\leq\gamma^+\left(\lim_{j\to\infty}\|u_j-u\|_{H_p}^p+\lim_{j\to\infty}\|u_j-u\|_{H_{q,a}}^q\right)\\
&\quad<\min\{H_p,H_q\}\left(\lim_{j\to\infty}\|u_j-u\|_{H_p}^p+\lim_{j\to\infty}\|u_j-u\|_{H_{q,a}}^q\right)
\leq\lim_{j\to\infty}\|\nabla u_j-\nabla u\|_p^p+\lim_{j\to\infty}\|\nabla u_j-\nabla u\|_{q,a}^q
\end{aligned}
$$
which is impossible.
Therefore $\ell=0$, so that by \eqref{formula} we have $\nabla u_j\to\nabla u$ in $\left[L^p(\Omega)\cap L_a^q(\Omega)\right]^N$ as $j\to\infty$, implying that $u_j\to u$ in $W^{1,\mathcal H}_0(\Omega)$ thanks to \eqref{rhoh} and Proposition \ref{P2.1}. This concludes the proof.

\end{proof}

Now, we complete the proof of Theorem \ref{T1.1}, proving first that functional $J_\gamma$ satisfies the geometric features of the mountain pass theorem.

\begin{lemma}\label{L3.1}
Let $(f_1)$ holds true. Then, for any $\gamma\in(-\infty,\min\{H_p,H_q\})$ there exist $\rho=\rho(\gamma)\in(0,1]$ and $\alpha=\alpha(\rho)>0$ such that $J_\gamma(u)\geq \alpha$ for any $u\in W^{1,\mathcal H}_0(\Omega)$, with $\|u\|=\rho$.
\end{lemma}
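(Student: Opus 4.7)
The plan is to bound $J_\gamma(u)$ from below by an expression of the form $C\rho^q-C'\rho^r$ with $C>0$ and $r>q$, where $\rho=\|u\|$, and then pick $\rho$ small. The three main ingredients are Lemma \ref{L2.5} (to absorb the Hardy terms into the gradient terms), $(f_1)$ combined with Proposition \ref{P2.3} (to control the nonlinearity by $\|u\|^q+\|u\|^r$), and Proposition \ref{P2.1}(ii)--(iii) (to compare $\varrho_{\mathcal H}(\nabla u)$ with $\|u\|^q$ on the unit ball).

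First, by Lemma \ref{L2.5} and the definition of $\gamma^+:=\max\{\gamma,0\}$, I would observe that
$$\gamma\left(\frac{1}{p}\|u\|_{H_p}^p+\frac{1}{q}\|u\|_{H_{q,a}}^q\right)\leq\frac{\gamma^+}{pH_p}\|\nabla u\|_p^p+\frac{\gamma^+}{qH_q}\|\nabla u\|_{q,a}^q,$$
since for $\gamma<0$ the left-hand side is nonpositive while the right-hand side vanishes. Combining this with the leading gradient terms of $J_\gamma$ and using $1/p\geq 1/q$ together with $H_p,H_q\geq\min\{H_p,H_q\}$, one gets
$$\frac{1}{p}\|\nabla u\|_p^p+\frac{1}{q}\|\nabla u\|_{q,a}^q-\gamma\left(\frac{1}{p}\|u\|_{H_p}^p+\frac{1}{q}\|u\|_{H_{q,a}}^q\right)\geq C_\gamma\, \varrho_{\mathcal H}(\nabla u),$$
where $C_\gamma:=\frac{1}{q}\bigl(1-\gamma^+/\min\{H_p,H_q\}\bigr)$ is strictly positive thanks to the hypothesis $\gamma<\min\{H_p,H_q\}$.

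Next, integrating $(f_1)$ yields $|F(x,t)|\leq\varepsilon|t|^q+\delta_\varepsilon|t|^r$ for some $r\in(q,p^*)$, so by Proposition \ref{P2.3} I would get $\int_\Omega F(x,u)\,dx\leq \varepsilon C_q\|u\|^q+\delta_\varepsilon C_r\|u\|^r$. Assuming $\|u\|=\rho\leq 1$, Proposition \ref{P2.1}(ii)--(iii) gives $\varrho_{\mathcal H}(\nabla u)\geq \rho^q$, so
$$J_\gamma(u)\geq (C_\gamma-\varepsilon C_q)\rho^q-\delta_\varepsilon C_r\rho^r.$$
Fixing $\varepsilon$ small enough that $C_\gamma-\varepsilon C_q\geq C_\gamma/2$ and then choosing $\rho\in(0,1]$ so small that $\delta_\varepsilon C_r\rho^{r-q}\leq C_\gamma/4$ (possible since $r>q$) produces $J_\gamma(u)\geq C_\gamma\rho^q/4=:\alpha>0$ on the sphere $\|u\|=\rho$.

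No real obstacle arises; this is the standard mountain pass geometry argument adapted to $W^{1,\mathcal H}_0(\Omega)$. The only subtle point is handling both signs of $\gamma$ in a uniform way, which is taken care of by the use of $\gamma^+$, together with the strict bound $\gamma<\min\{H_p,H_q\}$ that guarantees $C_\gamma>0$.
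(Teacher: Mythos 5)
Your proposal is correct and follows essentially the same route as the paper's proof: both apply Lemma \ref{L2.5} to absorb the Hardy terms via $\gamma^+$, use the integrated form of $(f_1)$ with Proposition \ref{P2.3} to bound $\int_\Omega F(x,u)\,dx$, invoke $1/p\geq 1/q$ and $H_p,H_q\geq\min\{H_p,H_q\}$ to reduce to $C_\gamma\,\varrho_{\mathcal H}(\nabla u)$, and then use Proposition \ref{P2.1} to get $\varrho_{\mathcal H}(\nabla u)\geq\|u\|^q$ when $\|u\|\leq 1$ before choosing $\varepsilon$ and $\rho$ small. The only differences are cosmetic (the particular explicit constants in the final choice of $\varepsilon$ and $\rho$).
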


\begin{proof}
Let us fix $\gamma\in(-\infty,\min\{H_p,H_q\})$.
By $(f_1)$, for any $\varepsilon>0$ we have a $\delta_\varepsilon>0$ such that
\begin{equation}\label{3.1}
|F(x,t)|\leq\varepsilon|t|^q+\delta_\varepsilon|t|^r,\quad\mbox{for a.e. }x\in\Omega\mbox{ and any }t\in\mathbb R.
\end{equation}
Thus, by \eqref{3.1}, Lemma \ref{L2.5}, Propositions \ref{P2.1} and \ref{P2.3}, for any  $u\in W^{1,\mathcal H}_0(\Omega)$ with $\|u\|\leq1$, we obtain
\begin{align*}
J_\gamma(u)&\geq\frac{1}{p}\left(1-\frac{\gamma^+}{H_p}\right)\|\nabla u\|_p^p
+\frac{1}{q}\left(1-\frac{\gamma^+}{H_q}\right)\|\nabla u\|_{q,a}^q
-\varepsilon\|u\|_q^q-\delta_\varepsilon\|u\|_r^r\\
&\geq\frac{1}{q}\left(1-\frac{\gamma^+}{\min\{H_p,H_q\}}\right)\varrho_{\mathcal H}(\nabla u)
-\varepsilon C_q\|u\|^q-\delta_\varepsilon C_r\|u\|^r\\
&\geq\left[\frac{1}{q}\left(1-\frac{\gamma^+}{\min\{H_p,H_q\}}\right)-\varepsilon C_q\right]\|u\|^q-\delta_\varepsilon C_r\|u\|^r,
\end{align*}
since $q>p$ and $\gamma<\min\{H_p,H_q\}$.
Therefore, choosing $\varepsilon>0$ sufficiently small so that
$$\sigma_\varepsilon=\frac{1}{q}\left(1-\frac{\gamma^+}{\min\{H_p,H_q\}}\right)-\varepsilon C_q>0,
$$ 
for any $u\in W^{1,\mathcal H}_0(\Omega)$ with 
$\|u\|=\rho\in\big(0,\min\{1,[\sigma_\varepsilon/(2\delta_\varepsilon C_r)]^{1/(r-q)}\}\big]$, we get
$$
J_\gamma(u)\geq\left(\sigma_\varepsilon-\delta_\varepsilon C_r\rho^{r-q}\right)\rho^q:=\alpha>0.
$$
This completes the proof.
\end{proof}

\begin{lemma}\label{L3.2}
Let $(f_1)-(f_2)$ hold true. Then, for any $\gamma\in\mathbb R$ there exists $e\in W^{1,\mathcal H}_0(\Omega)$ such that $J_\gamma(e)<0$ and $\|e\|>1$.
\end{lemma}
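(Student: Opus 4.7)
The plan is to fix any non-trivial $u_0\in W^{1,\mathcal H}_0(\Omega)$ (say $u_0\in C^\infty_c(\Omega)$ with $u_0\not\equiv0$) and to show that $J_\gamma(tu_0)\to-\infty$ as $t\to\infty$; then $e:=t_* u_0$ will work for any sufficiently large $t_*>0$. Apart from elementary scaling, the only ingredient needed is the standard super-$\theta$ lower bound on $F$ delivered by the Ambrosetti--Rabinowitz type hypothesis $(f_2)$.

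First I would derive from $(f_2)$ the estimate
$$F(x,t)\geq c_3|t|^\theta-c_4\quad\mbox{for a.e.\ }x\in\Omega\mbox{ and every }t\in\mathbb R,$$
with constants $c_3,c_4>0$ independent of $x$. For $t\geq t_0$ (one may assume $t_0>0$), the inequality $\theta F(x,t)\leq tf(x,t)$ implies that $t\mapsto F(x,t)/t^\theta$ is non-decreasing on $[t_0,\infty)$, hence $F(x,t)/t^\theta\geq F(x,t_0)/t_0^\theta\geq c/(\theta t_0^\theta)$ by the left inequality in $(f_2)$; the case $t\leq-t_0$ is symmetric. The remaining range $|t|\leq t_0$ is absorbed into the constant $c_4$ thanks to the bound $|F(x,t)|\leq\varepsilon|t|^q+\delta_\varepsilon|t|^r$ coming from $(f_1)$, which makes $F$ uniformly bounded on $\Omega\times[-t_0,t_0]$.

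Next I would evaluate $J_\gamma(tu_0)$ for $t\geq 1$. Thanks to Lemma~\ref{L2.5} the Hardy quantities $\|u_0\|_{H_p}$ and $\|u_0\|_{H_{q,a}}$ are finite, so the first four terms of $J_\gamma(tu_0)$ scale explicitly as $t^p$ or $t^q$; collecting them and discarding signs gives $A=A(\gamma,u_0)>0$ such that they are collectively bounded above by $A\,t^q$ uniformly in $t\geq1$, regardless of the sign of $\gamma$. For the nonlinear part, the lower bound on $F$ together with the embedding $W^{1,\mathcal H}_0(\Omega)\hookrightarrow L^\theta(\Omega)$ from Proposition~\ref{P2.3} (recall $\theta<p^*$) yields
$$\int_\Omega F(x,tu_0)\,dx\geq c_3\,t^\theta\|u_0\|_\theta^\theta-c_4|\Omega|,$$
with $\|u_0\|_\theta>0$ since $u_0\not\equiv0$. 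Combining the two estimates gives
$$J_\gamma(tu_0)\leq A\,t^q-c_3\,t^\theta\|u_0\|_\theta^\theta+c_4|\Omega|.$$

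Since $\theta>q$ by $(f_2)$, the right-hand side tends to $-\infty$ as $t\to\infty$, so one can choose $t_*>0$ large enough that both $J_\gamma(t_*u_0)<0$ and $\|t_*u_0\|=t_*\|u_0\|>1$ hold, and take $e:=t_*u_0$. The one mildly delicate point is guaranteeing that the constants $c_3,c_4$ in the super-$\theta$ lower bound are independent of $x\in\Omega$, but this is precisely the uniformity built into the constants $c$ of $(f_2)$ and $\delta_\varepsilon$ of $(f_1)$; no genuine obstruction is foreseen.
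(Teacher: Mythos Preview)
Your proposal is correct and follows essentially the same route as the paper: derive the super-$\theta$ lower bound $F(x,t)\geq d_1|t|^\theta-d_2$ from $(f_1)$--$(f_2)$, then show $J_\gamma(t\varphi)\to-\infty$ along a ray because the $-t^\theta$ term dominates the $O(t^q)$ contributions of the gradient and Hardy pieces. The only cosmetic difference is that the paper normalizes $\|\varphi\|=1$ so that $\varrho_{\mathcal H}(\nabla\varphi)=1$ and the gradient terms are bounded explicitly by $t^q/p$, whereas you absorb everything into a single constant $A$; both are equally valid.
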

\begin{proof}
Let us fix $\gamma\in\mathbb R$.
By $(f_1)$ and $(f_2)$, there exist $d_1>0$ and $d_2\geq0$ such that
\begin{equation}\label{3.4}
F(x,t)\geq d_1|t|^\theta-d_2\quad\mbox{for a.e. }x\in\Omega\mbox{ and any }t\in\mathbb R.
\end{equation}
Thus, if $\varphi\in W^{1,\mathcal H}_0(\Omega)$ with $\|\varphi\|=1$, then by Proposition \ref{P2.1} also $\varrho_{\mathcal H}(\nabla\varphi)=1$, so that by \eqref{3.4}, for any $t\geq1$ we have
$$
J_\gamma(t\varphi)\leq \frac{t^q}{p}-t^p\frac{\gamma^-}{p}\|\varphi\|_{H_p}^p-t^q\frac{\gamma^-}{q}\|\varphi\|_{H_{q,a}}^q
-t^\theta d_1\|\varphi\|_\theta^\theta-d_2|\Omega|.
$$
Since $\theta>q>p$ by $(f_2)$, passing to the limit as $t\rightarrow\infty$ we get $J_\gamma(t\varphi)\to-\infty$. Thus, the assertion follows by taking $e=t_{\infty}\varphi$, with $t_{\infty}$ sufficiently large.
\end{proof}

\begin{proof}[{\bf\textit{Proof of Theorem \ref{T1.1}}}]
Since $J_\gamma(0)=0$, by Proposition \ref{P3.3}, Lemmas \ref{L3.1}-\ref{L3.2} and the mountain pass theorem, we prove the existence of a non-trivial weak solution of \eqref{P}.
\end{proof}

We conclude this section with a result of independent interest, which shows how \eqref{claim} allows us to cover the complete situation in Theorem \ref{T1.1}, with $1<p<q<N$ and $\gamma\in(-\infty,\min\{H_p,H_q\})$. For this, we introduce the operator $L_\gamma:W^{1,\mathcal H}_0(\Omega)\to\left(W^{1,\mathcal H}_0(\Omega)\right)^*$, such that
$$\langle L_\gamma(u),v\rangle:=\int_\Omega\left(|\nabla u|^{p-2}+a(x)|\nabla u|^{q-2}\right)\nabla u\cdot\nabla v dx
-\gamma\int_\Omega\left(\frac{|u|^{p-2}u}{|x|^p}v+a(x)\frac{|u|^{q-2}u}{|x|^q}v\right)dx,
$$
for any $u$, $v\in W^{1,\mathcal H}_0(\Omega)$.

\begin{lemma}\label{L3.3}
Let $2\leq p<q<N$ and $\gamma\in(-\infty,\min\{H_p,H_q\}/\max\{\kappa_p,\kappa_q\})$, with $\kappa_p$ and $\kappa_q$ given by \eqref{simon}. Then, the operator $L_\gamma$ is a mapping of $(S)$ type, that is if $u_j\rightharpoonup u$ in $W^{1,\mathcal H}_0(\Omega)$ and 
\begin{equation}\label{L3.31}
\lim\limits_{j\to\infty}\langle L_\gamma(u_j)-L_\gamma(u),u_j-u\rangle=0,
\end{equation}
then $u_j\to u$ in $W^{1,\mathcal H}_0(\Omega)$.
\end{lemma}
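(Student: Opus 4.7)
The plan is to combine the monotonicity structure of $L_\gamma$, a Brezis--Lieb decomposition for the Hardy terms, and the Simon inequalities \eqref{simon} in the regime $p,q\geq 2$. Decomposing
$$
\langle L_\gamma(u_j) - L_\gamma(u), u_j - u\rangle = I_1 + I_2 - \gamma(J_1 + J_2),
$$
where $I_1, I_2$ are the gradient-monotone quantities and $J_1, J_2$ the corresponding Hardy-potential integrals, all four of which are non-negative by monotonicity, the case $\gamma \leq 0$ is immediate: every summand on the right-hand side is then non-negative, so \eqref{L3.31} gives $I_1, I_2 \to 0$, and combined with the Simon inequality and Proposition \ref{P2.1}(v) this yields $u_j \to u$ in $W^{1,\mathcal H}_0(\Omega)$.

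For $\gamma > 0$, I would first establish the asymptotic identities $J_1 = \|u_j - u\|_{H_p}^p + o(1)$ and $J_2 = \|u_j - u\|_{H_{q,a}}^q + o(1)$. Expanding $J_1$ into four integrals, the Brezis--Lieb lemma applied to $\|u_j\|_{H_p}^p$ (using the a.e.\ convergence $u_j \to u$ extracted from Proposition \ref{P2.3}) handles the diagonal terms, while each cross term tends to $\|u\|_{H_p}^p$ via the weak convergences $u_j \rightharpoonup u$ in $L^p(\Omega, |x|^{-p}\,dx)$ and $|u_j|^{p-2}u_j \rightharpoonup |u|^{p-2}u$ in $L^{p'}(\Omega, |x|^{-p}\,dx)$. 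Both follow from the uniform Hardy bounds of Lemma \ref{L2.5} together with a.e.\ convergence; the identical argument on $\Omega \setminus A$, patterned after \eqref{LG4}, delivers the formula for $J_2$.

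Substituting into \eqref{L3.31} yields
$$
I_1 + I_2 = \gamma\bigl(\|u_j - u\|_{H_p}^p + \|u_j - u\|_{H_{q,a}}^q\bigr) + o(1).
$$
Applying the Hardy inequalities of Lemma \ref{L2.5} to $u_j - u$ and then Simon to the gradient terms produces
$$
\|u_j - u\|_{H_p}^p \leq \frac{\|\nabla(u_j - u)\|_p^p}{H_p} \leq \frac{\kappa_p\, I_1}{H_p}, \qquad \|u_j - u\|_{H_{q,a}}^q \leq \frac{\kappa_q\, I_2}{H_q}.
$$
Feeding these back produces $\bigl(1 - \gamma \kappa_p/H_p\bigr) I_1 + \bigl(1 - \gamma \kappa_q/H_q\bigr) I_2 \leq o(1)$, and the hypothesis $\gamma < \min\{H_p, H_q\}/\max\{\kappa_p, \kappa_q\}$ makes both coefficients strictly positive, forcing $I_1, I_2 \to 0$. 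One final application of Simon plus Proposition \ref{P2.1}(v) closes the argument. I expect the main obstacle to be the rigorous verification of the Brezis--Lieb step for $J_1$ and $J_2$: in particular the weak convergence of $|u_j|^{p-2}u_j$ and $|u_j|^{q-2}u_j$ in the respective weighted dual spaces (the latter restricted to $\Omega \setminus A$, as in \eqref{LG4}), since the relevant Hardy embeddings are themselves not compact.
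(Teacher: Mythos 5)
Your argument is correct and follows essentially the same route as the paper: you decompose $\langle L_\gamma(u_j)-L_\gamma(u),u_j-u\rangle$ into the monotone gradient part and the Hardy part, show via Br\'ezis--Lieb and weak convergence of the cross terms that the Hardy part equals $\|u_j-u\|_{H_p}^p+\|u_j-u\|_{H_{q,a}}^q+o(1)$, use Simon's inequality on the gradient part, and close with Lemma~\ref{L2.5}. The only presentational difference is that the paper packages the Hardy limit as $\ell$ and concludes by contradiction (exactly as at the end of Proposition~\ref{P3.3}), whereas you absorb the Hardy inequality directly into the coefficients $1-\gamma\kappa_p/H_p$ and $1-\gamma\kappa_q/H_q$ and conclude $I_1,I_2\to0$ without the contradiction step; both are equivalent. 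Your concern about the Br\'ezis--Lieb step is resolved exactly as in \eqref{LG4}: the weak convergences $|u_j|^{p-2}u_j\rightharpoonup|u|^{p-2}u$ in $L^{p'}(\Omega,|x|^{-p})$ and $|u_j|^{q-2}u_j\rightharpoonup|u|^{q-2}u$ in $L^{q'}(\Omega\setminus A,a(x)|x|^{-q})$ follow from boundedness (via Lemma~\ref{L2.5}), a.e.\ convergence, and \cite[Proposition A.8]{AP}, so no further argument is needed.
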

\begin{proof}
Let us fix $2\leq p<q<N$ and $\gamma\in(-\infty,\min\{H_p,H_q\}/\max\{\kappa_p,\kappa_q\})$. Let $\{u_j\}_j$ be a sequence in $W^{1,\mathcal H}_0(\Omega)$ such that
$u_j\rightharpoonup u$ in $W^{1,\mathcal H}_0(\Omega)$ and \eqref{L3.31} holds true. Then, up to a subsequence $\{u_j\}_j$ is bounded in $W^{1,\mathcal H}_0(\Omega)$ and by Lemma \ref{L2.5} and \cite[Theorem 4.9]{B}, we obtain
$$
\|u_j-u\|_{H_p}^p+\|u_j-u\|_{H_{q,a}}^q\to\ell,
\qquad u_j(x)\rightarrow u(x)\mbox{ a.e. in }\Omega,
$$
as $j\to\infty$.
Thus, by \cite[Theorem 1]{BL} we get
\begin{equation}\label{L3.32}
\|u_j\|_{H_p}^p-\|u_j-u\|_{H_p}^p=\|u\|_{H_p}^p+o(1),\qquad\|u_j\|_{H_{q,a}}^q-\|u_j-u\|_{H_{q,a}}^q=\|u\|_{H_{q,a}}^q+o(1)
\end{equation}
as $j\to\infty$.  
While, by \eqref{simon} we have
\begin{equation}\label{L3.33}
\begin{aligned}
\int_{\Omega}&\left[\left(|\nabla u_j|^{p-2}\nabla u_j-|\nabla u|^{p-2}\nabla u\right)
+a(x)\left(|\nabla u_j|^{q-2}\nabla u_j-|\nabla u|^{q-2}\nabla u\right)\right]
\cdot(\nabla u_j-\nabla u)dx\\
&\geq\frac{1}{\max\{\kappa_p,\kappa_q\}}\left(\|u_j-u\|_p^p+\|u_j-u\|_{q,a}^q\right)
\end{aligned}
\end{equation}
for any $j\in\mathbb N$. Hence, combining \eqref{L3.31}-\eqref{L3.33}, as $j\to\infty$
$$
\frac{1}{\max\{\kappa_p,\kappa_q\}}\|\nabla u_j-\nabla u\|_p^p+\|\nabla u_j-\nabla u\|_{q,a}^q
=\gamma\left(\|u_j-u\|_{H_p}^p+\|u_j-u\|_{H_{q,a}}^q\right)+o(1)=\gamma\ell+o(1),
$$
which recalls \eqref{formula}, up to a constant. From this point, we can argue as in the end of the proof of Proposition \ref{P3.3}, proving that $u_j\to u$ in $W^{1,\mathcal H}_0(\Omega)$.
\end{proof}

\begin{remark}\label{osservazione}
{\em When $2\leq p<q<N$ and $\gamma\in(-\infty,K_{p,q}\min\{H_p,H_q\})$, with
$$
K_{p,q}:=\min\left\{1,\frac{1}{\max\{\kappa_p,\kappa_q\}}\right\},
$$
we can prove Proposition \ref{P3.3} arguing as in \cite[Lemma 5.1]{LD} and using Lemma \ref{L3.3} instead of \eqref{claim}.}
\end{remark}

\section*{Acknowledgments}

The author is member of {\em Gruppo Nazionale per l'Analisi Ma\-te\-ma\-ti\-ca, la Probabilit\`a e le loro Applicazioni} (GNAMPA) 
of the {\em Istituto Nazionale di Alta Matematica} (INdAM).
The author realized the manuscript within the auspices of the GNAMPA project titled {\em Equazioni alle derivate parziali: problemi e modelli} (Prot\_20191219-143223-545), of the FAPESP Project titled {\em Operators with non standard growth} (2019/23917-3), of the FAPESP Thematic Project titled {\em Systems and partial differential equations} (2019/02512-5) and of the CNPq Project titled {\em Variational methods for singular fractional problems} (3787749185990982).

\end{document}